\newtheorem{Theorem}{Theorem}[section]
\newtheorem{lemma}[Theorem]{Lemma}
\def \R{{\mathbb R}}
\def \0{\lambda_{0}}
\begin{document}

\title[]{Degenerate Solutions of Yamabe-Type  Equations on Products of Spheres}

\author{H\'ector Barrantes G. and Jorge Dávila }

\address{\textsc{H\'ector Barrantes G.} Universidad de Costa Rica. Sede de Occidente. 20201. Alajuela. Costa Rica.}
\email{hector.barrantes@ucr.ac.cr}%

\address{\textsc{Jorge D\'avila Ortiz.}
	Department of Engineering and Science ITESM, Campus Le\'on Guanajuato\\
	Le\'on Guanajuato 37190, M\'exico.} 

\email{jorge.davila@tec.mx}

\begin{abstract}
We study Yamabe-type equations on the product of two spheres $(S^n \times S^n, G_\delta)$, where $G_\delta$ is a family of Riemannian metrics parametrized by $\delta > 0$. Using bifurcation theory and isoparametric functions, we establish the existence of degenerate solutions that are invariant under the diagonal action of $O(n+1)$ and depend non-trivially on both factors. Our analysis relies on the properties of Gegenbauer polynomials and a careful application of local bifurcation techniques for simple eigenvalues. These results extend previous studies by demonstrating the emergence of solutions that do not solely depend on a single factor, thereby providing new insights into the structure of solutions for Yamabe-type problems on product manifolds.
\end{abstract}

\keywords{Yamabe-type equation, degenerate solutions, bifurcation theory, product of spheres, isoparametric functions, Gegenbauer polynomials, elliptic PDEs on manifolds, scalar curvature}

\subjclass[2020]{35J60, 35B32, 58J55, 58E11}
\maketitle

\section{Introduction}

Given a Riemannian manifold $(M^n , g)$ of dimension $n\geq 3$,  a {\it Yamabe-type } equation  is an elliptic equation of the form

\begin{equation}\label{eq1}
	-  \Delta_g u + \lambda u = \lambda u^{q-1}
\end{equation}

\noindent where  $\lambda >0$  and $q >2$.   Equation (\ref{eq1}) is said to be subcritical if $q-1 < p_{n}: =\frac{n+2}{n-2}$, critical if $q -1 =  p_{2n}$  and supercritical if $q  -1>  p_{2n}$.    The critical case  appears when trying to solve the Yamabe problem which consists in finding metrics of constant scalar curvature in the conformal class $[g]$ of $g$.   If $h = u^{p_n-1} g  \in [g]$ where $u \colon  M \to \R_{\geq 0}$ ,then the scalar curvature $s_h$ of $h$ is equal to a constant $\lambda  \in \R$  if and only if $u$ satisfies

\begin{equation}\label{eq2}
	-  a_n \Delta_g u + s_g u = \lambda u^{p_n}
\end{equation}

\noindent Where $a_{n}:= \frac{4(n-1)}{n-2}$.  Note that  eq.  (\ref{eq1})  has the form of  eq.  (\ref{eq2})  with $\lambda = \frac{s_g}{a_n} $
 and $q-1 = p_n$.

 We are interested in the  product manifold  $\mathbb{S}^n \times \mathbb{S}^n$, equipped with metrics of the form $G_\delta = g_0^n + \delta g_0^n$, where $g_0^n$ is the standard round metric on the sphere and $\delta > 0$ is a fixed parameter. 
 In connection with computations of the Yamabe constant in \cite{Akutagawa}, a natural question arose as to whether all energy-minimizing solutions of the Yamabe equation on certain Riemannian products (such as \(\mathbb{S}^n \times \mathbb{S}^n\) or \(\mathbb{S}^n \times \mathbb{R}^n\)) depend only on one of the factors. Invariant solutions under the diagonal action of \(O(n+1)\) have been shown to exhibit remarkable structure, and their existence has been established via variational and bifurcation methods \cite{Petean1,Petean-Barrantes}. Recent advances \cite{Petean-Betancourt-Batalla} have uncovered new families of such solutions.

 The appearance of \emph{degenerate solutions}—that is, solutions for which the linearized operator possesses a nontrivial kernel—is an important and intriguing aspect in this context. Such solutions typically indicate the presence of a bifurcation point from which new branches of nontrivial solutions emerge, and they are closely related to qualitative changes in the structure of the solution set. Recent developments concerning degenerate solutions of Yamabe-type equations on the standard sphere serve as a foundation for the present work.

 In \cite{Petean2}, Petean constructed nontrivial degenerate solutions by analyzing bifurcation branches associated with isoparametric functions on the sphere. These solutions emerge from bifurcation points of the trivial branch and exhibit symmetry properties determined by the level sets of Cartan--Münzner polynomials. More recently, Catalán and Petean \cite{Catalan-Petean} refined this theory by computing the local behavior of bifurcating branches and establishing explicit criteria under which bifurcation points are transcritical. 
 
 In this work, we construct explicit families of degenerate solutions to equation \eqref{eq1} on $(\mathbb{S}^n \times \mathbb{S}^n, G_\delta)$, which are invariant under the diagonal $O(n+1)$-action and exhibit nontrivial dependence on both factors. Our approach relies on the identification of a natural isoparametric function $f(p,q) = \langle p, q \rangle$, which induces a cohomogeneity-one action on the manifold and permits a reduction of the problem to an ordinary differential equation involving Gegenbauer polynomials.
 
 Applying the Crandall--Rabinowitz bifurcation theorem \cite{Crandall-Rabinowitz}, we establish the existence of bifurcation points $\lambda_k$ at which branches of nontrivial solutions emerge. For each even integer $k$, we show that these branches contain degenerate solutions with precisely $k$ nodal domains. This construction yields a new class of solutions that both complements and extends the families described in \cite{Petean-Betancourt-Batalla}, offering new insight into the solution structure of Yamabe-type problems on product manifolds.
 
 The paper is structured as follows. In Section 2, we review the geometric framework and introduce the isoparametric reduction. Section 3 is devoted to local bifurcation analysis and the existence of degenerate solutions. In Section 4, we conclude with the proof of the main result.

 \vspace{0.5cm}
We consider  $n\geq 2$ and let $g_0^n$ denote the curvature 1 metric on $\mathbb{S}^n$. For any $\delta >0$ we consider the  product  $\mathbb{S}^n \times \mathbb{S}^n $ with the Riemannian metric  $ G_{\delta}:=g_0^n +\delta g_0^n$. Then eq.  (\ref{eq1})   has the form

\begin{equation}\label{eq3}
	-\Delta_{G_{\delta}} u + \lambda u = \lambda u^{q -1} 
\end{equation}

\noindent If  $\lambda  = \frac{n(n-1) (1+\delta^{-1} ) }{a_{2n}}$ and $q-1 = p_{2n}  = \frac{2n+2}{2n-2}$ then  the equation  (\ref{eq3}) is the Yamabe equation on  $ (\mathbb{S}^n \times \mathbb{S}^n ,  G_{\delta})$. Then  the equation (\ref{eq3}) is subcritical if $q-1 < p_{2n}$ , critical if $q -1=  p_{2n}$  and supercritical if $q >  p_{2n}$.

We also  consider the isometric action of $O(n+1)$ on $(\mathbb{S}^n \times \mathbb{S}^n , G_{\delta} )$ given by $$A\cdot(x,y)=(Ax,Ay)$$
 and the function $f \colon \mathbb{S}^n \times \mathbb{S}^n \to [-1,1]$  given by
$$f(p, q) = \langle p, q \rangle.$$

\noindent Note that $f$ is invariant by the action of $O(n+1)$.  By a direct computation we obtain

$$\Delta_{G_{\delta}}f = -n\left( 1+ \frac{1}{\delta}\right) f , \;\;\;\;\;\;
\left|\nabla_{G_\delta}f\right|_{G_\delta}^2 =\left( 1+ \frac{1}{\delta}\right)(1-f^2).$$

This implies that $f$ is  an isoparametric function (see \cite{Wang} for the definition and basic results concerning isoparametric functions). The only critical values of $f$ are its minimum -1 and its maximum 1. The focal manifolds are  $f^{-1}(-1) := M_1,  f^{1}(1) := M_2$ which are diffeomorphic to $\mathbb{S}^n$. 
\noindent  Let $q_f := \frac{2n - dim M_i +2}{2n - dim M_i -2 }  = \frac{n+2}{n-2}$  and note that $p_{2n} < q_f$

 Using the theory of local bifurcation for simple eigenvalues of  M. G. Crandall, P. H. Rabinowitz \cite{Crandall-Rabinowitz}, we have the following result

\begin{Theorem}\label{t1}
  For any $q \in (2, q_f)$ and any $\delta >0$,  let $\lambda_k:= \lambda_{k,\delta, q} := \frac{k(k+n-1)}{q-2}\left(1+ \frac{1}{\delta}\right)$.
  Then for any positive integer $k$and any  $\lambda \in \bigl(\lambda_{k }, \lambda_{k+1}\bigr]$   the eq. (\ref{eq3})  has at least $k$  positive solutions invariant  by  the  diagonal action of  $O(n+1)$, which depends non-trivially on both factors.
\end{Theorem}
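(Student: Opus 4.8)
The strategy is a standard reduction-and-bifurcation argument. First I would use the isoparametric function $f(p,q)=\langle p,q\rangle$ to reduce \eqref{eq3} to an ODE. Since $f$ is $O(n+1)$-invariant and isoparametric, any solution of the form $u = v\circ f$ with $v\colon[-1,1]\to\R_{>0}$ automatically inherits the diagonal $O(n+1)$-symmetry, and using the computed values $\Delta_{G_\delta}f = -n(1+\tfrac1\delta)f$ and $|\nabla_{G_\delta}f|^2 = (1+\tfrac1\delta)(1-f^2)$ together with the chain rule $\Delta_{G_\delta}(v\circ f) = v''(f)\,|\nabla f|^2 + v'(f)\,\Delta f$, equation \eqref{eq3} becomes, after dividing by $(1+\tfrac1\delta)$, an ODE of Gegenbauer type:
\begin{equation}\label{eq-ode}
-(1-t^2)\,v''(t) + n t\,v'(t) + \tilde\lambda\, v(t) = \tilde\lambda\, v(t)^{q-1},\qquad \tilde\lambda := \frac{\lambda}{1+\tfrac1\delta}.
\end{equation}
The operator $L_n v := -(1-t^2)v'' + n t v'$ is self-adjoint with respect to the weight $w(t) = (1-t^2)^{(n-1)/2}\,dt$ on $(-1,1)$ (this is the classical Gegenbauer/Jacobi weight), its spectrum is $\{\mu_k = k(k+n-1): k\ge 0\}$, and the eigenfunctions are the Gegenbauer polynomials $C_k^{(n/2)}(t)$, each $\mu_k$ being a \emph{simple} eigenvalue. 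This simplicity is exactly what makes the Crandall–Rabinowitz theorem applicable.

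Second, I would set up the bifurcation problem around the constant solution $u\equiv 1$ (which solves \eqref{eq3} for every $\lambda$). Writing $u = 1 + \varphi$ and linearizing, the linearized operator at the trivial branch is $\varphi \mapsto -\Delta_{G_\delta}\varphi + \lambda\varphi - \lambda(q-1)\varphi = -\Delta_{G_\delta}\varphi - \lambda(q-2)\varphi$; restricted to functions of the form $v\circ f$ this is $(1+\tfrac1\delta)\bigl(L_n v - \tfrac{\lambda(q-2)}{1+1/\delta}v\bigr)$, whose kernel is nontrivial precisely when $\tfrac{\lambda(q-2)}{1+1/\delta} = \mu_k = k(k+n-1)$, i.e. when $\lambda = \lambda_k = \frac{k(k+n-1)}{q-2}(1+\tfrac1\delta)$. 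Working in the Banach space $X = \{\,v\circ f : v\in C^{2,\alpha}([-1,1])\,\}$ (or the corresponding Hölder/Sobolev space of $O(n+1)$-invariant functions), the kernel at $\lambda=\lambda_k$ is one-dimensional, spanned by $\psi_k := C_k^{(n/2)}\circ f$; the transversality condition $\partial_\lambda$(linearization)$[\psi_k]\notin \mathrm{Range}$ holds because the range is the $w$-orthogonal complement of $\psi_k$ and $\langle\psi_k,\psi_k\rangle_w \ne 0$. Crandall–Rabinowitz then gives, for each positive integer $k$, a local branch $\mathcal{B}_k$ of nontrivial solutions emanating from $(\lambda_k, 1)$.

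Third, I must upgrade "local branch near $(\lambda_k,1)$" to "$k$ distinct positive solutions for each $\lambda\in(\lambda_k,\lambda_{k+1}]$." The nodal-domain count is the key qualitative fact: solutions on $\mathcal{B}_j$ near the bifurcation point look like $1 + \varepsilon\, C_j^{(n/2)}\circ f + o(\varepsilon)$, and since $C_j^{(n/2)}$ has exactly $j$ simple zeros in $(-1,1)$, the function $u - 1$ changes sign $j$ times along the level sets of $f$, giving $j$ nodal domains; standard Sturm-type/continuity arguments show this nodal count is preserved along the branch as long as the solution stays positive and nondegenerate generically, so branches with different $j$ produce genuinely distinct solutions. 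To conclude the count at a \emph{fixed} $\lambda$, I would invoke a global bifurcation result (Rabinowitz global alternative) or the variational/degree-theoretic continuation used in \cite{Petean2, Catalan-Petean, Petean-Betancourt-Batalla}: for $\lambda\in(\lambda_k,\lambda_{k+1}]$, each of the branches $\mathcal{B}_1,\dots,\mathcal{B}_k$ (which bifurcate at the smaller values $\lambda_1<\dots<\lambda_k<\lambda$) can be continued to parameter value $\lambda$ while remaining positive, yielding $k$ solutions distinguished by their nodal counts $1,\dots,k$; each depends nontrivially on both factors because $C_j^{(n/2)}\circ f$ is not a function of either sphere alone (as $f(p,q)=\langle p,q\rangle$ mixes the two coordinates and is nonconstant on each factor's fibers).

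The main obstacle is the third step, specifically the \emph{global} continuation of each branch up to the prescribed $\lambda$ while maintaining positivity and controlling the nodal structure — local Crandall–Rabinowitz alone does not give solutions at a value of $\lambda$ far from $\lambda_k$. This requires either a Rabinowitz-type global alternative (ruling out that the branch returns to the trivial solution, blows up, or loses positivity before reaching $\lambda$) or an a priori estimate on subcritical positive solutions of \eqref{eq-ode} together with a degree argument; the subcriticality hypothesis $q\in(2,q_f)$ with $q_f = \frac{n+2}{n-2}$ (and $p_{2n}<q_f$) is precisely what furnishes the needed compactness and a priori bounds via the reduced ODE on the interval, which is where that restriction on $q$ is used.
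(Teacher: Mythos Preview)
The paper does not actually prove Theorem~\ref{t1}; immediately after the statement it says the result is a direct consequence of \cite[Theorem~1.3]{Petean-Betancourt-Batalla} and refers the reader there for details. So there is no ``paper's own proof'' to compare against beyond that citation.

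Your outline is a faithful sketch of the argument behind that cited result, and it aligns with the machinery the paper itself builds in Section~2: the isoparametric reduction to a Gegenbauer-type ODE, the simplicity of the eigenvalues $\mu_k=k(k+n-1)$, the Crandall--Rabinowitz bifurcation at each $\lambda_k$, and the nodal-count separation of branches. You correctly flag the global continuation (Rabinowitz alternative plus a priori bounds coming from the subcriticality $q<q_f$) as the nontrivial step that local bifurcation alone does not give, and that is indeed where the work in \cite{Petean-Betancourt-Batalla} lies. Two small index slips: the self-adjoint weight for the operator $L_n v=-(1-t^2)v''+nt\,v'$ is $(1-t^2)^{(n-2)/2}$, not $(1-t^2)^{(n-1)/2}$ (this is the weight the paper uses throughout Section~2), and correspondingly the eigenfunctions are the Gegenbauer polynomials $C_k^{((n-1)/2)}$ rather than $C_k^{(n/2)}$. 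These do not affect the strategy.
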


It is important to emphasize that Theorem \ref{t1} plays a fundamental role in this paper, as it explicitly establishes the existence of positive solutions to equation \eqref{eq3}, invariant under the diagonal action of  $O(n+1)$, 
 which depend non-trivially on both factors of the product manifold $\mathbb{S}^n \times \mathbb{S}^n$. We will not include its proof here, since the existence of such solutions is a direct consequence of Theorem 1.3 established by Petean et al. in   \cite[Theorem 1.3]{Petean-Betancourt-Batalla}.  We refer the interested reader to this reference for the complete details of the proof.

We call a \emph{degenerate solution}  $(u_0, \lambda_0)$ of equation (\ref{eq1}) if the linearized equation
on  $(u_0, \lambda_0)$

$$-\Delta_g v   + \lambda (1-(q-1)u^{q-2})v=0 $$

has a non-trivial solution.

In this article we show that  there exist a degenerate solutions of (\ref{eq3}), invariant by the by the cohomogeneity one action of  $O(n+1)$ on $\mathbb{S}^n \times \mathbb{S}^n$, which depend non-trivially on both factors.

Every invariant function $u\colon \mathbb{S}^n \times \mathbb{S}^n \to \R$ can be written as $u = \varphi \circ f$, where $\varphi \colon [-1,1] \to \R$.
Since $f$ is smooth the regularity of $u$ is equal to the regularity of $\varphi$. We have that

$$\Delta_{G_\delta}u = ( \varphi '' \circ f )  |\nabla_{G_{\delta}} f |^2_{G_{\delta}} + (\varphi ' \circ f)   \Delta_{G_{\delta}} f
=  \left( 1+ \frac{1}{\delta}\right)(1-t^2)  \   \varphi ''(t) -n\left( 1+ \frac{1}{\delta}\right) t  \  \varphi '(t) $$

\noindent for $t\in [-1,1]$.

\noindent Therefore $u$ solves the equation (\ref{eq2})  if and only if
\begin{equation} \label{t}
	(1 - t^2)\varphi''(t) - n t \varphi'(t) + \frac{\lambda(q - 2)}{1 + \delta^{-1}} \left( \varphi^{q - 1}(t) - \varphi(t) \right) = 0
\end{equation}

\noindent for $t\in [-1,1]$.


\begin{Theorem}\label{t2}
 For any positive even  integer $k$ there exist a degenerate solution  (for some $\lambda > 0$) of the  equation \ref{eq3}    on $(\mathbb{S}^n \times \mathbb{S}^n, G_{\delta}) $, invariant  by  the  diagonal action of  $O(n+1)$, which depends non-trivially on both factors.
\end{Theorem}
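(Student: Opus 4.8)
The plan is to locate the degenerate solution inside a branch of non-constant solutions of the reduced equation \eqref{t} that bifurcates from the trivial branch $u\equiv1$, and to detect degeneracy through the eigenvalue of the linearized operator that drives the bifurcation. Fix an even integer $k$. Writing $L\varphi:=(1-t^{2})\varphi''-nt\,\varphi'$, equation \eqref{t} reads $\mathcal F(\varphi,\lambda):=L\varphi+\mu(\lambda)(\varphi^{q-1}-\varphi)=0$ with $\mu(\lambda)>0$ strictly increasing. I would regard $\mathcal F$ as a map $X\times(0,\infty)\to Y$ between the Banach spaces $X$, $Y$ of functions $\varphi$ on $[-1,1]$ for which $\varphi\circ f$ lies in $C^{2,\alpha}$, resp.\ $C^{0,\alpha}$, on $\mathbb S^{n}\times\mathbb S^{n}$; by the remark preceding \eqref{t}, $u=\varphi\circ f$ inherits the regularity of $\varphi$, and $\varphi\equiv1$ (i.e.\ $u\equiv1$) solves $\mathcal F=0$ for every $\lambda$. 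The operator $L$ is the ultraspherical (Gegenbauer) operator: its spectrum on $X$ is $\{-j(j+n-1):j\ge0\}$, each eigenvalue \emph{simple}, with eigenfunction the Gegenbauer polynomial $C_{j}:=C_{j}^{((n-1)/2)}$, which is even for $j$ even, odd for $j$ odd, and has exactly $j$ zeros in $(-1,1)$. Hence $D_{\varphi}\mathcal F(1,\lambda)=L+\mu(\lambda)(q-2)$ has one-dimensional kernel $\mathbb R\,C_{k}$ exactly at $\lambda=\lambda_{k}$ (the value of Theorem~\ref{t1}), and the Crandall--Rabinowitz transversality condition reduces to $\int_{-1}^{1}C_{k}(t)^{2}(1-t^{2})^{(n-2)/2}\,dt\neq0$, which holds. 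By \cite{Crandall-Rabinowitz} there is $\varepsilon>0$ and a $C^{1}$ curve $s\in(-\varepsilon,\varepsilon)\mapsto(\varphi_{s},\lambda_{s})\in X\times(0,\infty)$ of solutions of \eqref{t} with $(\varphi_{0},\lambda_{0})=(1,\lambda_{k})$ and $\varphi_{s}=1+sC_{k}+o(s)$, exhausting the non-constant solutions near $(1,\lambda_{k})$. For $s\neq0$, $\varphi_{s}$ is non-constant, so $u_{s}:=\varphi_{s}\circ f$ solves \eqref{eq3} with $\lambda=\lambda_{s}$, is $O(n+1)$-invariant, and depends non-trivially on both factors (for fixed $p$ the slice map $q\mapsto f(p,q)$ is onto $[-1,1]$, so $q\mapsto u_{s}(p,q)$ is non-constant, and likewise in $p$).

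Since \eqref{t} is the invariant reduction of \eqref{eq3}, the operator $D_{\varphi}\mathcal F(\varphi_{s},\lambda_{s})$ coincides, up to a nonzero constant, with the restriction to $O(n+1)$-invariant functions of the linearized operator $v\mapsto-\Delta_{G_{\delta}}v+\lambda_{s}(1-(q-1)u_{s}^{q-2})v$. Let $\nu(s)$ be the eigenvalue of $D_{\varphi}\mathcal F(\varphi_{s},\lambda_{s})$ nearest $0$; it depends continuously on $s$ and $\nu(0)=0$. If $\nu(s^{\ast})=0$ for some $s^{\ast}\neq0$, then its eigenfunction $\psi_{s^{\ast}}\in X$ gives, through $\psi_{s^{\ast}}\circ f$, a non-trivial solution of the linearized equation at $u_{s^{\ast}}$, so $(u_{s^{\ast}},\lambda_{s^{\ast}})$ is a degenerate solution --- non-constant, invariant, depending on both factors, with $\lambda_{s^{\ast}}>0$. (Equivalently, at a point where $d\lambda_{s}/ds=0$ one differentiates $\mathcal F(\varphi_{s},\lambda_{s})\equiv0$ to obtain $D_{\varphi}\mathcal F(\varphi_{s},\lambda_{s})\dot\varphi_{s}=-\dot\lambda_{s}\,D_{\lambda}\mathcal F(\varphi_{s},\lambda_{s})$, so $\dot\varphi_{s^{\ast}}\neq0$ lies in the kernel.) Thus the problem reduces to producing a \emph{second} zero of $\nu$ along the branch.

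This is where $k$ \emph{even} is used. A second-order expansion of the branch --- the computation of the local behaviour of bifurcating branches in \cite{Catalan-Petean} --- gives $\nu'(0)=c\int_{-1}^{1}C_{k}(t)^{3}(1-t^{2})^{(n-2)/2}\,dt$ with $c\neq0$; for even $k$ the integrand is even and the integral is nonzero, so $\nu'(0)\neq0$, the bifurcation at $\lambda_{k}$ is transcritical, and $\nu$ changes sign across $s=0$ (for odd $k$ the integral vanishes, one gets a pitchfork, and this route fails). To close the argument I would continue the branch by a Rabinowitz-type global continuation: the component $\mathcal C_{k}$ of the solution set of \eqref{t} through the local branch is either unbounded in $X\times(0,\infty)$ or returns to the trivial branch at some $(1,\lambda_{j})$, $j\neq k$. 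The subcriticality hypothesis $q<q_{f}$ --- $q_{f}$ being the critical Sobolev exponent of the $n$-dimensional focal submanifolds $M_{i}$ --- supplies a priori $C^{2,\alpha}$ bounds on compact $\lambda$-intervals; a nodal-count argument (the number of sign changes of $\varphi-1$ is locally constant off the trivial branch, $=k-1$ near $s=0$) rules out a return at $\lambda_{j}$ with $j\neq k$; and excluding escape to $\lambda\to0,\infty$ forces $\mathcal C_{k}$ to be a bounded loop through $(1,\lambda_{k})$. Around such a loop the continuous function $\nu$ changes sign an even number of times, and it changes sign once at $s=0$; hence $\nu(s^{\ast})=0$ for some $s^{\ast}\neq0$, producing the required degenerate solution (its kernel element inherits the $k$-fold nodal structure of $C_{k}$).

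\textbf{Main obstacle.} The hard part is the global control of $\mathcal C_{k}$: establishing the a priori bounds that keep the branch bounded --- this is precisely why $q<q_{f}$ is imposed --- and ruling out both escape of $\mathcal C_{k}$ to $\lambda\to0$ or $\lambda\to\infty$ and its reconnection to the trivial branch at a bifurcation value $\lambda_{j}$ with $j\neq k$. The local expansion giving $\nu'(0)$ and the nodal count of the kernel element are comparatively routine once the global picture is in place.
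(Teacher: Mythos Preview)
Your local analysis matches the paper's: Crandall--Rabinowitz bifurcation at $\lambda_{k}$ with one-dimensional kernel spanned by the Gegenbauer polynomial $P_{k,n}$, and transcriticality for even $k$ driven by $\int_{-1}^{1}P_{k,n}^{3}(1-t^{2})^{(n-2)/2}\,dt\neq0$. (The paper records this as $\lambda'(0)<0$ in Lemma~\ref{lambda-prima} rather than $\nu'(0)\neq0$; the two statements are equivalent by the standard Crandall--Rabinowitz eigenvalue-perturbation formula.) One small slip: the nodal count of $\varphi-1$ along the branch is $k$, not $k-1$, since $P_{k,n}$ has exactly $k$ simple zeros in $(-1,1)$.

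The global mechanism you propose, however, has a genuine gap. You want to conclude that the component $\mathcal C_{k}$ is a \emph{bounded loop}, so that the eigenvalue $\nu$ must change sign an even number of times along it. But you give no argument excluding $\lambda\to\infty$ along $\mathcal C_{k}$, and Theorem~\ref{t1} strongly suggests the branches do extend to arbitrarily large $\lambda$: once the $k$th branch appears it persists for all $\lambda>\lambda_{k}$. Rabinowitz's alternative is then resolved by unboundedness, not by the loop picture, and the sign-counting argument for $\nu$ has nothing to work with. Even were the component bounded, it need not be a smooth simple closed curve (secondary bifurcations are not excluded), so ``going around the loop'' would require further justification.

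The paper sidesteps all of this by using precisely the observation you put in parentheses: a point on the branch at which $\lambda$ is stationary is automatically degenerate. Concretely, restrict to the one-sided component $D_{k}^{+}$ containing $\{(w(s),\lambda(s)):s>0\}$. Since $\lambda'(0)<0$, the branch immediately enters the region $\lambda<\lambda_{k}$. Theorem~\ref{P1} gives a uniform lower bound $\lambda\geq\lambda_{0}>0$ on nontrivial invariant solutions, and Theorem~\ref{P2} gives $C^{2,\alpha}$-compactness on $\lambda\in[\lambda_{0},\lambda_{k}]$; together these force the infimum $\lambda^{\ast}:=\inf_{D_{k}^{+}}\lambda\in[\lambda_{0},\lambda_{k})$ to be \emph{attained} at some $(w^{\ast},\lambda^{\ast})$. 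The nodal count rules out $w^{\ast}\equiv0$ (that would force $\lambda^{\ast}=\lambda_{k}$). If $D_{w}F(w^{\ast},\lambda^{\ast})$ were invertible, the implicit function theorem would produce solutions in $D_{k}^{+}$ with $\lambda<\lambda^{\ast}$, contradicting minimality. Hence $(w^{\ast},\lambda^{\ast})$ is the desired degenerate, non-constant, $O(n+1)$-invariant solution depending on both factors. Note that only a \emph{lower} bound on $\lambda$ and compactness on a fixed interval are needed; no upper bound on $\lambda$ along the branch is required, so the ``escape to $\lambda\to\infty$'' issue simply never arises. Your parenthetical remark was the right idea --- promote it to the main argument and drop the loop.
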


\textit{Remark:} Since  $q_f =  \frac{n+2}{n-2} > \frac{4n}{2n-2} =  p_{2n} $ the Theorems \ref{t1} and \ref{t2} applies to supercritical equations.

Although the existence of degenerate solutions invariant under the diagonal action of \( O(n+1) \) has been previously established by Petean et al. in a general context (see \cite[Theorem 1.3]{Petean-Betancourt-Batalla}), we present here a complete proof of Theorem 1.2. Our purpose is to explicitly clarify the adaptation of the bifurcation techniques to our specific setting on the product manifold \( (\mathbb{S}^n\times \mathbb{S}^n, G_{\delta}) \), emphasizing the technical details and particular features arising in this scenario.


\section{Local bifurcation theory}
 We denote by $X_I$ the set of  functions on  $\mathbb{S}^n\times \mathbb{S}^n$ invariant  under  the action of $O(n+1)$.
Consider the Banach space

$$C^{2, \alpha}\left(X_I \right):= X_I \cap C^{2, \alpha} \left(\mathbb{S}^n \times \mathbb{S}^n\right) .$$

\noindent Similarly  we  define

 $$C^{0, \alpha}\left(X_I \right):= X_I \cap C^{0, \alpha} \left(\mathbb{S}^n \times \mathbb{S}^n\right) ,$$

\noindent  We identify $C^{2, \alpha}\left(X_I \right)$ with the set of functions 
$\varphi \in C^{2,\alpha}([-1 , 1])$ such
that $\varphi'(-1)=\varphi'(1)=0$. Similarly  the spaces $C^{0, \alpha}\left(X_I \right):= X_I \cap C^{0, \alpha} \left(\mathbb{S}^n \times \mathbb{S}^n\right) $,
 is identified with $C^{0,\alpha}([-1, 1])$.

Given a solution  $u \colon \mathbb{S}^n\times \mathbb{S}^n \to \R_{>0}$ of equation (\ref{eq3})
we let  $ w = u-1$.  Then, $u$ is a solution of  (\ref{eq3}) if and only if $w$ verifies
\begin{equation}\label{eq-w+1}
-\Delta_{G_{\delta}} w  + \lambda (w+ 1) = \lambda (w+1)^{q-1}
\end{equation}

Assume now that \(w = \varphi \circ f\), where \(\varphi \colon [-1,1] \to \mathbb{R}\) is a smooth function and \(f \colon \mathbb{S}^n \times \mathbb{S}^n \to [-1,1]\) denotes the isoparametric function associated with the diagonal action of the orthogonal group \(O(n+1)\). Note that the function \(w\) depends solely on the variable \(t = f(x,y)\), and the equation for \(w\) reduces to the following second-order ordinary differential equation
\begin{equation} \label{eq-4}
	(1 - t^2)\varphi''(t) - n t \varphi'(t) + \frac{\lambda(q - 2)}{1 + \delta^{-1}} \left[(\varphi(t) + 1)^{q - 1} - \varphi(t) - 1 \right] = 0.
\end{equation}

In order for the corresponding function \(u = \varphi \circ f + 1\) to define a smooth solution of the original problem on the entire product manifold \(\mathbb{S}^n \times \mathbb{S}^n\), it is necessary that \(\varphi\) satisfy appropriate boundary conditions at the endpoints \(t = \pm 1\), which correspond to the focal submanifolds (i.e., singular level sets) of the isoparametric function $f$. 
\begin{equation} \label{boundary-cond-1}
	n\, \varphi'(-1) - \lambda \left[ (\varphi(-1) + 1) - (\varphi(-1) + 1)^q \right] = 0,
\end{equation}
\begin{equation} \label{boundary-cond-2}
	-n\, \varphi'(1) - \lambda \left[ (\varphi(1) + 1) - (\varphi(1) + 1)^q \right] = 0.
\end{equation}

We define $F:C^{2, \alpha}\left(X_I \right) \times \R_{\geq 0} \rightarrow C^{0, \alpha}\left(X_I \right)$
by

$$F(w,\lambda )= -\Delta_{G_{\delta}} w +\lambda \left(w+1 -(w+1)^{q-1} \right).$$

We have for any $\lambda \geq 0$ that $F(0,\lambda )=0$ and we will study solutions of $F(u,\lambda )=0$
which bifurcate for the curve $(0,\lambda )$. The local bifurcation theory is well known, any detail about  what we
will use in this section can be found for instance in Chapter 2 of \cite{Ambrosetti-Malchiodi} or in \cite{Nirenberg}.

Note that

$$F_w' (0,\lambda) [v] = -\Delta_{G_{\delta}}v - \lambda (q-2) v .$$

\noindent We write $v(x)=\varphi \circ f $ for a function
$\varphi :[-1, 1 ] \rightarrow \R$ with $\varphi'(-1)=\varphi'(1 )=0$, 
and $F_w ' ( 0,\lambda) [v]=0$ if and only if

\begin{equation}  \label{eq-eigenvalues}
(1-t^2)\varphi''(t)-nt \varphi'(t) +  \frac{\lambda (q-2)}{1+\delta^{-1}} \varphi(t)=0
\end{equation}

For every $\lambda_k:= \frac{k(k+n+1)}{q-2}(1+\delta^{-1})  $, this equation corresponds to the eigenvalue equation for the Laplacian on the sphere and is satisfied by 
the Gegenbauer polynomials  $P_{k,  n}$  which are given by the formula

\begin{equation}\label{P_kn-formula}
P_{k, n} (t) =   (1-t^2)^{-\frac{n-2}{2}}   \frac{(-1)^k}{2^k k!}\frac{d^k}{dt^k}(1-t^2)^{k+ \frac{n-2}{2}} 
\end{equation}

This implies that   $\ker ( F_u ' (0,\lambda_k ))=\langle P_{k, n} \rangle $ has dimension 1.

The Gegenbauer polynomials are particular cases of the Jacobi polynomials (\cite[Lecture 2]{Askey})

$$
P_k^{(\alpha, \beta)}(t) = (1 - t)^{-\alpha} (1 + t)^{-\beta} \frac{(-1)^k}{2^k k!} \frac{d^k}{dt^k} \left[ (1 - t)^{k+\alpha} (1 + t)^{k+\beta} \right]
$$

\noindent Which satisfies the Jacobi equation  
 
 \[
 (1 - t^2)y''(t) + (\beta - \alpha - (\alpha + \beta + 2)t)y'(t) + k(k + \alpha + \beta + 1)y(t) = 0.
 \]
 
\noindent where  $k$ is a positive integer, and $\alpha = \beta > -1$  are real numbers.  
Thus, the polynomials  $P_{k, n} $    are obtained by setting  $ \alpha =  \beta   = \frac{n-2}{2}  $  in the previous equation. So in our  notation $ P_{k, n} := P_k^{(\frac{n-2}{2}, \frac{n-2}{2})}$.

\noindent 

\begin{lemma} \label{zeros-P_kn}
	For every positive integer $k \geq 1$, the polynomial  $P_{k, n} $ has  $k$ zeroes 	simples in $\left(-1, 1\right)$.
\end{lemma}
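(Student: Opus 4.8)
The plan is to prove Lemma \ref{zeros-P_kn} by exploiting the fact that $P_{k,n}$ is an eigenfunction of the singular Sturm--Liouville operator appearing on the left-hand side of \eqref{eq-eigenvalues}. First I would rewrite the differential operator $L\varphi = (1-t^2)\varphi'' - nt\varphi'$ in self-adjoint form as $L\varphi = (1-t^2)^{-(n-2)/2}\,\frac{d}{dt}\!\left[(1-t^2)^{n/2}\varphi'\right]$, so that $L$ is symmetric with respect to the weight $\rho(t) = (1-t^2)^{(n-2)/2}$ on $[-1,1]$. From \eqref{P_kn-formula}, the Rodrigues-type formula shows that $P_{k,n}$ is a polynomial of exact degree $k$ (the leading coefficient is nonzero, as one checks by differentiating $(1-t^2)^{k+(n-2)/2}$ exactly $k$ times and reading off the top-degree term), and it satisfies $L P_{k,n} = -k(k+n-1)P_{k,n}$, i.e. it is the eigenfunction for the $k$-th eigenvalue of this weighted problem.

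Next I would invoke the classical orthogonality of the Gegenbauer/Jacobi polynomials: $\int_{-1}^{1} P_{j,n}(t)P_{k,n}(t)\,\rho(t)\,dt = 0$ for $j \neq k$. Since $\{P_{0,n},\dots,P_{k-1,n}\}$ spans the space of polynomials of degree $\le k-1$, orthogonality gives $\int_{-1}^{1} P_{k,n}(t)\,Q(t)\,\rho(t)\,dt = 0$ for every polynomial $Q$ with $\deg Q \le k-1$. Now suppose, for contradiction, that $P_{k,n}$ has fewer than $k$ sign changes in $(-1,1)$; let $t_1,\dots,t_m$ with $m < k$ be the points of $(-1,1)$ where $P_{k,n}$ changes sign, and set $Q(t) = \prod_{i=1}^m (t - t_i)$, a polynomial of degree $m \le k-1$. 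Then $P_{k,n}(t)Q(t)$ does not change sign on $(-1,1)$ and is not identically zero, so $\int_{-1}^{1} P_{k,n}(t)Q(t)\rho(t)\,dt \neq 0$, contradicting the orthogonality relation. Hence $P_{k,n}$ has at least $k$ sign changes in $(-1,1)$; being a polynomial of degree $k$ it has at most $k$ real zeros, so it has exactly $k$ zeros in $(-1,1)$, each of odd order, hence simple (a zero of multiplicity $\ge 2$ would be a repeated root, leaving fewer than $k$ roots total, or would fail to be a sign change).

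An alternative and perhaps cleaner route, which I would mention, is purely via the structure of the Rodrigues formula: write $g(t) = (1-t^2)^{k+(n-2)/2}$, which together with its first $k-1$ derivatives vanishes at $t = \pm 1$, and apply Rolle's theorem $k$ times. The function $g$ vanishes at the two endpoints, so $g'$ has a zero in $(-1,1)$; but $g'$ also still vanishes at $\pm 1$, so $g''$ has at least two zeros in $(-1,1)$; iterating, $g^{(k)}$ has at least $k$ zeros in $(-1,1)$. Since $g^{(k)}(t) = \text{const}\cdot(1-t^2)^{(n-2)/2}P_{k,n}(t)$ and $(1-t^2)^{(n-2)/2}$ is nonzero on $(-1,1)$, the polynomial $P_{k,n}$ has at least $k$ zeros there; being of degree exactly $k$, it has exactly $k$, all simple.

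The main obstacle is essentially bookkeeping rather than depth: one must be careful to confirm that $P_{k,n}$ genuinely has degree $k$ (so that the count "at most $k$ roots" is valid) and that the weight $\rho(t) = (1-t^2)^{(n-2)/2}$ is integrable and positive on $(-1,1)$ for all $n \ge 2$, which holds since $(n-2)/2 > -1$. A secondary point requiring a line of justification is that a zero of even multiplicity is not a sign change, so the $k$ sign changes forced by the orthogonality argument must in fact be $k$ distinct simple zeros; this follows immediately once one knows the total number of roots (counted with multiplicity) cannot exceed $k$. I would present the Rolle's theorem argument as the primary proof for its self-containedness, using only \eqref{P_kn-formula} and elementary calculus.
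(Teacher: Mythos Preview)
Your proposal is correct and both routes you outline (the orthogonality/sign-change argument and the iterated Rolle argument applied to the Rodrigues formula~\eqref{P_kn-formula}) are valid, classical proofs that $P_{k,n}$ has exactly $k$ simple zeros in $(-1,1)$. The paper, however, proceeds quite differently: it verifies the cases $k=1,2$ by hand and then argues by induction on $k$ using Sturm's comparison theorem. Since $P_{k,n}$ and $P_{k+1,n}$ both solve \eqref{eq-eigenvalues} with $(k+1)(k+n) > k(k+n-1)$, Sturm interlacing places a zero of $P_{k+1,n}$ strictly between consecutive zeros of $P_{k,n}$ and pushes the $i$-th zero of $P_{k+1,n}$ to the left of the $i$-th zero of $P_{k,n}$; a separate endpoint argument using $P_{k,n}(1)=1$, $P_{k,n}(-1)=(-1)^k$, and the signs of the derivatives at the last zeros then forces one additional zero of $P_{k+1,n}$ in $(t_k,1)$.

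Your arguments are more self-contained and arguably cleaner for the bare statement: the Rolle route needs only \eqref{P_kn-formula} and elementary calculus, and the orthogonality route is the textbook proof for any family of orthogonal polynomials. The paper's Sturm-comparison approach, by contrast, yields extra information you do not obtain---namely the strict interlacing of the zeros of $P_{k,n}$ and $P_{k+1,n}$ and the monotonicity of the $i$-th zero in $k$---at the cost of relying on boundary values $P_{k,n}(\pm 1)$ and a somewhat more delicate sign-tracking step. For the purposes of Lemma~\ref{zeros-P_kn} as stated, either of your proofs would serve; if you present the Rolle argument as primary, the only care needed is exactly what you flag: confirming $\deg P_{k,n}=k$ and that $(1-t^2)^{k+(n-2)/2}$ and its first $k-1$ derivatives vanish at $t=\pm 1$, which holds since $k+(n-2)/2 \ge k$ for all $n\ge 2$.
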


\begin{proof}
	For $k= 1, 2$, using the formula \ref{P_kn-formula} we obtain that the polynomials $P_{1, n} $ and $P_{2, n} $ are given by 
	
	$$P_{1, n}(t) = t, \;\; P_{2, n}(t)= \frac{n+1}{n} t^2 -\frac{1}{n}$$  	
	and the lemma follows for $k= 1, 2$. 
	
	Suppose that it is also true for $k$ and let's prove for $k+1$. Let 
	$t_1, t_2, \ldots t_k$ be the zeroes of  $P_{k, n} $ in $(-1, 1)$. Since 
	$P_{k, n} $ and $P_{k+1, n} $ are solutions of equation  \ref{eq-eigenvalues} we have 
	
	$$
	(1-t^2)P_{k, n}''(t)-nt P_{k, n}'(t) +  \frac{\lambda (q-2)}{1+\delta^{-1}} P_{k, n}(t)=0
	$$
	
	$$
	(1-t^2)P_{k+1, n}''(t)-nt P_{k+1, n}'(t) +  \frac{\lambda (q-2)}{1+\delta^{-1}} P_{k+1, n}(t)=0
	$$
	Note that $(k+1)(n+k) > k(n+k-1)$ and  
	$$\frac{P_{k+1, n}'(-1)}{P_{k+1, n}(-1)} = \frac{-(k+1)(n+k)}{n} < 
	\frac{-k(n+k-1)}{n}  = \frac{P_{k, n}'(-1)}{P_{k, n}(-1)}$$

	then by Sturm's comparison theorem, between every two zeroes of  $P_{k, n}$ there are at least one zero of $P_{k+1, n}$ and the i-th zero of $P_{k+1, n}$ is less than the i-th zero of $P_{k,  n}$. So $P_{k+1, n}$ has at least one zero in  each interval $(-1, t_1), (t_1, t_2), \ldots , (t_{k-1}, t_k)$. 
	Now, for every positive integer  $k \geq 1$ we have that $P_{k, n}(1) =1$ (See \cite{Morimoto}), and if $k$ is even, $P_{k, n}$ is an even function. If $k$ is odd  $P_{k, n}$ is an odd function.  It follows that 
	$P_{k, n}(-1) = (-1)^k$. 
	Let
	$\tilde{t}_1,  \tilde{t}_2, \ldots \tilde{t}_k$ be the zeroes of  $P_{k+1, n}.$ Then $P_{k, n}'(t_i)$ and $P_{k+1, n}'(\tilde{t}_i)$ has oposite signs. Since $t_k$ is the last zero of $P_{k, n}$  and $P_{k, n}(1) =1$, we have that  $P_{k, n}'(t_k)$  is positive  and  therefore $P_{k+1, n}'(\tilde{t_k})$ must be  negative.  But  $P_{k+1, n}(1) =1$, this implies that  necessarily  $P_{k+1, n}$ must change sign in $(\tilde{t_k}, 1)$, therefore $P_{k+1, n}$ has $k+1$ zeroes in $(-1, 1) $.
\end{proof}

\begin{lemma}
	For any $u, v\in  C^2[-1, 1]$ 
\begin{equation}
	\int_{-1}^1 F_w'(0, \lambda)[v] u \;  (1-t^2)^{\frac{n-2}{2}}dt =  	\int_{-1}^1F_w'(0, \lambda)[u] v \; (1-t^2)^{\frac{n-2}{2}} dt
\end{equation}	
	
\end{lemma}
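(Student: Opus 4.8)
The plan is to recognize that, under the identification of $O(n+1)$-invariant functions with functions $\varphi$ on $[-1,1]$, the operator $F_w'(0,\lambda)$ is a Sturm--Liouville operator which is formally self-adjoint with respect to the weight $(1-t^2)^{\frac{n-2}{2}}$. The algebraic heart of the matter is the divergence-form identity
$$(1-t^2)\varphi''(t) - nt\,\varphi'(t) \;=\; (1-t^2)^{-\frac{n-2}{2}}\,\frac{d}{dt}\!\left[(1-t^2)^{\frac{n}{2}}\,\varphi'(t)\right],$$
which one verifies in one line by differentiating the right-hand side. Recalling that, for $v=\varphi\circ f$, the function $F_w'(0,\lambda)[v]$ is represented on $[-1,1]$ by $-(1+\delta^{-1})\big[(1-t^2)\varphi''-nt\varphi'\big]-\lambda(q-2)\varphi$, this identity gives
$$F_w'(0,\lambda)[v]\,(1-t^2)^{\frac{n-2}{2}} \;=\; -(1+\delta^{-1})\,\frac{d}{dt}\!\left[(1-t^2)^{\frac n2}\varphi'\right]\;-\;\lambda(q-2)\,\varphi\,(1-t^2)^{\frac{n-2}{2}}.$$

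Next I would pair this against the function $u$ (represented by $\psi$), integrate over $[-1,1]$, and integrate the first term by parts once:
$$\int_{-1}^{1}\frac{d}{dt}\!\left[(1-t^2)^{\frac n2}\varphi'\right]\psi\,dt \;=\; \Big[(1-t^2)^{\frac n2}\varphi'\psi\Big]_{-1}^{1}\;-\;\int_{-1}^{1}(1-t^2)^{\frac n2}\,\varphi'\psi'\,dt.$$
Since $n\ge 2$, the factor $(1-t^2)^{\frac n2}$ vanishes at $t=\pm1$, while $\varphi'$ and $\psi$ stay bounded because $u,v\in C^{2}[-1,1]$, so the boundary term is zero. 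Therefore
$$\int_{-1}^{1}F_w'(0,\lambda)[v]\,u\,(1-t^2)^{\frac{n-2}{2}}\,dt \;=\; (1+\delta^{-1})\int_{-1}^{1}(1-t^2)^{\frac n2}\varphi'\psi'\,dt \;-\;\lambda(q-2)\int_{-1}^{1}\varphi\psi\,(1-t^2)^{\frac{n-2}{2}}\,dt.$$

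The right-hand side is manifestly symmetric under interchanging $\varphi$ and $\psi$, i.e.\ under interchanging $u$ and $v$; running the same computation with the roles of $u$ and $v$ swapped therefore produces the same value, which is exactly $\int_{-1}^{1}F_w'(0,\lambda)[u]\,v\,(1-t^2)^{\frac{n-2}{2}}\,dt$, and this establishes the claimed identity. There is no genuine obstacle here: the only point that requires a word of justification is the vanishing of the boundary terms, which is precisely why $(1-t^2)^{\frac{n-2}{2}}$ (and not the trivial weight) is the correct measure for this operator and why the standing hypothesis $n\ge 2$ is enough. Note in particular that the Neumann conditions $\varphi'(\pm1)=0$ coming from the definition of $C^{2,\alpha}(X_I)$ play no role, which is why the lemma can be stated for all $u,v\in C^2[-1,1]$.
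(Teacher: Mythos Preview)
Your proof is correct and follows essentially the same route as the paper: put the second-order part into the Sturm--Liouville divergence form $(1-t^2)^{-\frac{n-2}{2}}\big[(1-t^2)^{\frac n2}\varphi'\big]'$ and integrate by parts, with the boundary terms killed by the factor $(1-t^2)^{\frac n2}$. The only cosmetic difference is that you stop after one integration by parts at the symmetric Dirichlet form $\int(1-t^2)^{\frac n2}\varphi'\psi'\,dt$, whereas the paper integrates by parts a second time to swap $u$ and $v$ directly; your version has the advantage of making the vanishing of the boundary terms (and the role of $n\ge2$) explicit.
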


\begin{proof}
Note that 

$$\left( (1-t^2)^{\frac{n-2}{2}} v ' \right)'   = -nt  (1-t^2)^{\frac{n-2}{2}}v' + (1-t^2)^{\frac{n}{2}} v''	$$

Then, using integration by parts 

\begin{eqnarray*}
	\int_{-1}^1 F_w'(0, \lambda)[v] u \;  (1-t^2)^{\frac{n-2}{2}}dt &=& 	
	\int_{-1}^1 \left( (1-t^2) v'' -nt v' +\frac{\lambda(q-2)}{1+\delta^{-1}} v  \right) u\;   (1-t^2)^{\frac{n-2}{2}}dt  \\
	&=& 
	\int_{-1}^1 (1-t^2)^{\frac{n}{2}} v'' u -nt (1-t^2)^{\frac{n-2}{2}} v' u+ \frac{\lambda(q-2)}{1+\delta^{-1}} v   u\;   (1-t^2)^{\frac{n-2}{2}}dt  \\
	&=&  
	\int_{-1}^1 \left( (1-t^2)^{\frac{n-2}{2}} v ' \right)' u + \frac{\lambda(q-2)}{1+\delta^{-1}} v   u\;   (1-t^2)^{\frac{n-2}{2}}dt\\
	&=& 
	\int_{-1}^1 \left( (1-t^2)^{\frac{n-2}{2}} u' \right)' v  + \frac{\lambda(q-2)}{1+\delta^{-1}}   u v\;    (1-t^2)^{\frac{n-2}{2}}dt\\
	&=& \int_{-1}^1  (1-t^2)u'' -2nt u' +\frac{\lambda(q-2)}{1+\delta^{-1}}   uv\;    (1-t^2)^{\frac{n-2}{2}}dt\\
	&=&	\int_{-1}^1 F_w'(0, \lambda)[u] v \;  (1-t^2)^{\frac{n-2}{2}}dt
\end{eqnarray*}
\end{proof}

It folowos from the previous lemma, that  the operator $ F_w'(0, \lambda_k) $ is self adjoint in the $(1-t^2)^{\frac{n-2}{2}} $-weighted $L^2 $ product on $L^2[-1, 1]$.

\begin{Theorem} \label{Bifurc-point}
	Let $\lambda_k = \frac{k(k+n-1)}{q-2}\left(1+ \frac{1}{\delta}\right)$. For every $k \geq 1$, the point $\left(0, \lambda_k\right)$ is a bifurcation point of the equation $F(w, \lambda) = 0$.
\end{Theorem}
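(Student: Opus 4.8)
The plan is to apply the Crandall--Rabinowitz theorem on bifurcation from a simple eigenvalue \cite{Crandall-Rabinowitz} to the map $F$ at the point $(0,\lambda_k)$. Recall that $F(0,\lambda)=0$ for every $\lambda\ge 0$, so $\{(0,\lambda)\}$ is the trivial branch. A first, routine point is that $F$ is of class $C^{2}$ near $(0,\lambda_k)$: restricting attention to the open set $\{w\in C^{2,\alpha}(X_I):\|w\|_{C^{2,\alpha}}<1/2\}$, on which $w+1>1/2>0$ pointwise, the term $(w+1)^{q-1}$ is the composition of $w$ with the smooth function $s\mapsto s^{q-1}$ on $(0,\infty)$, so $F$ and the partial derivatives $F_w$, $F_\lambda$, $F_{w\lambda}$ exist and are continuous there. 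It remains to verify the three structural hypotheses at $(0,\lambda_k)$.

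For the simplicity of the kernel, this is exactly what was obtained above. Writing $v=\varphi\circ f$, the equation $F_w'(0,\lambda_k)[v]=0$ reduces to \eqref{eq-eigenvalues} with $\tfrac{\lambda_k(q-2)}{1+\delta^{-1}}=k(k+n-1)$, i.e.\ the Jacobi equation with eigenvalue $k(k+n-1)$; its Gegenbauer solution $P_{k,n}$ yields the smooth invariant function $v_0:=P_{k,n}\circ f\in C^{2,\alpha}(X_I)$, while the second, linearly independent solution is singular at $t=\pm1$ and does not belong to $C^{2,\alpha}(X_I)$. Hence $\ker F_w'(0,\lambda_k)=\langle v_0\rangle$ is one-dimensional.

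Next I would establish that $L_k:=F_w'(0,\lambda_k)=-\Delta_{G_\delta}-\lambda_k(q-2)\,\mathrm{Id}$ has range of codimension one, equal to the annihilator of $v_0$. Since $L_k$ is a second-order elliptic operator on the compact manifold $\mathbb{S}^n\times\mathbb{S}^n$ restricted to the $O(n+1)$-invariant sector, Schauder theory gives that $L_k:C^{2,\alpha}(X_I)\to C^{0,\alpha}(X_I)$ is Fredholm of index zero; combined with the self-adjointness of $L_k$ in the $(1-t^2)^{\frac{n-2}{2}}$-weighted $L^2$ product (the preceding lemma), this identifies its cokernel with $\ker L_k$ and yields
$$\mathrm{Range}\,L_k=\Bigl\{g\in C^{0,\alpha}(X_I):\int_{-1}^1 g(t)\,P_{k,n}(t)\,(1-t^2)^{\frac{n-2}{2}}\,dt=0\Bigr\},$$
which is closed of codimension one. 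Finally, for the transversality condition, a direct differentiation gives $F_{w\lambda}'(0,\lambda)[v]=v-(q-1)v=-(q-2)v$, so $F_{w\lambda}'(0,\lambda_k)[v_0]=-(q-2)v_0$; pairing with $P_{k,n}$ in the weighted product yields $-(q-2)\int_{-1}^1 P_{k,n}(t)^2(1-t^2)^{\frac{n-2}{2}}\,dt\neq 0$, since $q>2$, $P_{k,n}\not\equiv 0$, and the weight is positive on $(-1,1)$. Therefore $F_{w\lambda}'(0,\lambda_k)[v_0]\notin\mathrm{Range}\,L_k$. With the three hypotheses in place, the Crandall--Rabinowitz theorem shows that $(0,\lambda_k)$ is a bifurcation point of $F(w,\lambda)=0$; moreover, near $(0,\lambda_k)$ the zero set of $F$ is the union of the trivial branch and a $C^1$ curve $s\mapsto(w(s),\lambda(s))$ through $(0,\lambda_k)$ with $w'(0)=v_0$.

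The only step requiring genuine care is the codimension-one identification of $\mathrm{Range}\,L_k$: one must pass between the elliptic PDE on $\mathbb{S}^n\times\mathbb{S}^n$, where Schauder estimates furnish the Fredholm property, and the singular Sturm--Liouville problem on $[-1,1]$, where the self-adjointness lemma together with the positivity of the weight $(1-t^2)^{\frac{n-2}{2}}$ pins down the range as the orthogonal complement of $\langle v_0\rangle$; a secondary bookkeeping point is remaining within the region $w+1>0$, so that $F$ is smooth. Everything else is a direct verification from facts already established above.
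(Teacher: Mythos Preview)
Your proof is correct and follows the same approach as the paper: apply Crandall--Rabinowitz after checking simplicity of the kernel (via the Gegenbauer eigenvalue problem), codimension-one range (via self-adjointness in the weighted $L^2$ product), and transversality (via $F_{w\lambda}'(0,\lambda_k)[v_0]=-(q-2)v_0$). Your version is in fact more careful in places---you explicitly invoke the Fredholm index-zero property for the range identification and pair against $P_{k,n}$ with the correct weight $(1-t^2)^{(n-2)/2}$, whereas the paper's transversality line has a sign slip and its justifying integral omits both the weight and the square of $P_{k,n}$.
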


\begin{proof}
	Consider the linearized operator at $(0, \lambda_k)$, denoted by $F_w'(0, \lambda_k)$.
	
	Recall that the eigenfunctions of the Laplace–Beltrami operator on the unit sphere $\mathbb{S}^n$ are the spherical harmonics. When restricted to zonal functions, these correspond to the ultraspherical polynomials $P_{k,n}(t)$, with associated eigenvalues $\mu_k = k(k+n-1)$ for $k \in \mathbb{N}$. These polynomials form an orthogonal basis of $L^2([-1,1], (1 - t^2)^{\frac{n-2}{2}} dt)$.
	
	The operator $F_w'(0, \lambda_k)$ is not invertible. Indeed, for any $u \in C^{2,\alpha}([-1,1])$, we have
	\[
	0 = \int_{-1}^1 F_w'(0, \lambda_k)[P_{k,n}(t)] \cdot u(t)\, (1 - t^2)^{\frac{n-2}{2}} dt = \int_{-1}^1 F_w'(0, \lambda_k)[u(t)] \cdot P_{k,n}(t)\, (1 - t^2)^{\frac{n-2}{2}} dt,
	\]
	which implies that the range of $F_w'(0, \lambda_k)$ satisfies
	\[
	\mathrm{Range}(F_w'(0, \lambda_k)) = \left\{ y \in C^{0,\alpha}([-1,1]) : \int_{-1}^1 y(t)\, P_{k,n}(t)\, (1 - t^2)^{\frac{n-2}{2}} dt = 0 \right\}.
	\]
	
	Next, we compute the mixed derivative of $F$ at $(0, \lambda_k)$ in the direction of $P_{k,n}$. We obtain
	\[
	F_{w,\lambda}''(0, \lambda_k)[P_{k,n}] = (q-2) P_{k,n}(t).
	\]
	Since $\int_{-1}^1 P_{k,n}(t)\, dt \neq 0$, it follows that
	\[
	F_{w,\lambda}''(0, \lambda_k)[P_{k,n}] \notin \mathrm{Range}(F_w'(0, \lambda_k)).
	\]
	
	Therefore, all the hypotheses of the Crandall–Rabinowitz bifurcation theorem (see \cite[Theorem 2.8]{Ambrosetti-Malchiodi}) are satisfied, and we conclude that $(0, \lambda_k)$ is a bifurcation point of $F(w, \lambda) = 0$.
	
	This completes the proof.
\end{proof}

This bifurcation gives rise to a nontrivial branch of solutions emerging from the trivial line at $\lambda = \lambda_k$. More precisely, near $(0, \lambda_k)$, the solution set is the union of the trivial branch $\lambda \mapsto (0, \lambda)$ and a smooth curve of nontrivial solutions $(w(s), \lambda(s))$, satisfying $w(s) \not\equiv 0$ for $s \neq 0$.

\begin{equation}\label{psi}
w(s) = s P_{k, n} + \psi (s), \;\;\;   \lambda(0) = \lambda_k, \;\;\;\psi (0)= 0 = \psi' (0)
\end{equation}

It follows that 
	\[
\frac{\partial w}{\partial s}(0) = P_{k, n} ,  \quad  \lambda(0) = \lambda_k, \quad  \quad w(0) = 0, \quad 
\]

\begin{lemma}
	Let $k$ be any positive integer.
	\begin{enumerate}
		\item If $k$ is odd, then 
		\( \displaystyle 
		\int_{-1}^1 P_{k,n}^3(t) (1 - t^2)^{\frac{n-2}{2}}\, dt = 0.
		\)
		\item If $k$ is even, then 
			\( \displaystyle  
		\int_{-1}^1 P_{k,n}^3(t) (1 - t^2)^{\frac{n-2}{2}}\, dt > 0.
		\)
	\end{enumerate}
\end{lemma}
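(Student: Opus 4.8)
The plan is to treat the two parities separately: the case $k$ odd is pure symmetry, while the case $k$ even reduces to the strict positivity of a single linearization coefficient of the ultraspherical polynomials. The odd case is immediate. As recorded in the proof of Lemma \ref{zeros-P_kn}, $P_{k,n}$ is an odd function when $k$ is odd; hence $P_{k,n}^3$ is odd, the weight $(1-t^2)^{\frac{n-2}{2}}$ is even, and the integrand is odd on the symmetric interval $[-1,1]$, so the integral is $0$.

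For even $k$ the key is to recognize the integral, up to a positive factor, as a linearization coefficient. Up to a positive normalizing constant $\kappa_k$, the polynomial $P_{k,n}$ coincides with the Gegenbauer polynomial $C_k^\lambda$ with $\lambda=\frac{n-1}{2}$; since $n\geq 2$ we have $\lambda\geq\frac{1}{2}>0$ and $\kappa_k>0$ (the exact value of $\kappa_k$ plays no role). Now $P_{k,n}^2$ is an even polynomial of degree $2k$, and the polynomials $P_{2j,n}$, $j\geq 0$, are mutually orthogonal with respect to the weight $(1-t^2)^{\frac{n-2}{2}}$, so we may expand $P_{k,n}^2=\sum_{j=0}^{k}b_j\,P_{2k-2j,n}$. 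Multiplying by $P_{k,n}$, integrating against the weight, and using orthogonality annihilates every term with $2k-2j\neq k$; since $k$ is even the term $j=k/2$ survives, giving
\[
\int_{-1}^{1}P_{k,n}^3(t)\,(1-t^2)^{\frac{n-2}{2}}\,dt = b_{k/2}\int_{-1}^{1}P_{k,n}^2(t)\,(1-t^2)^{\frac{n-2}{2}}\,dt,
\]
and the remaining integral is strictly positive. It therefore suffices to prove $b_{k/2}>0$.

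Passing to the Gegenbauer normalization, $b_{k/2}=\kappa_k A_{k/2}$ is a positive multiple of the coefficient $A_{k/2}$ of $C_k^\lambda$ in the product expansion $C_k^\lambda\,C_k^\lambda=\sum_{j=0}^{k}A_j\,C_{2k-2j}^\lambda$. By Dougall's classical linearization formula for ultraspherical polynomials (see \cite{Askey}), each $A_j$ is an explicit quotient of Pochhammer symbols $(\lambda)_\bullet$, $(2\lambda)_\bullet$ and factorials, all strictly positive when $\lambda>0$ and $0\leq j\leq k$; in particular $A_{k/2}>0$, hence $b_{k/2}>0$, which finishes the even case.

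The only genuine input is the strict positivity of the linearization coefficients of $C_k^\lambda$ for $\lambda>0$; the rest — the parity observation, the positivity of the constant relating the two normalizations, and orthogonality — is routine bookkeeping. The one point to check with care is that the Gegenbauer parameter $\lambda=\frac{n-1}{2}$ is \emph{strictly} positive for every admissible $n$ (which holds since $n\geq 2$ forces $\lambda\geq\frac{1}{2}$), as this is precisely the hypothesis under which Dougall's coefficients are strictly rather than merely weakly positive. If one prefers not to quote the explicit formula, the same positivity can be obtained inductively from the three-term recurrence $t\,C_k^\lambda=\frac{k+1}{2(k+\lambda)}C_{k+1}^\lambda+\frac{k+2\lambda-1}{2(k+\lambda)}C_{k-1}^\lambda$, whose coefficients are positive for $\lambda>0$; I would keep this as a fallback.
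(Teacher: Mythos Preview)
Your proof is correct. The odd case is identical to the paper's. For the even case, both you and the paper reduce the integral to the positivity of a single linearization coefficient---the coefficient of $P_{k,n}$ in the orthogonal expansion of $P_{k,n}^2$ (equivalently, the constant term in the expansion of $P_{k,n}^3$)---but the justifications diverge. The paper quotes Gasper's recurrence argument for Jacobi linearization coefficients \cite{Gasper}, together with Hylleraas's sign pattern \cite{Hylleraas}, to conclude $G_0>0$; this is the general Jacobi machinery specialized to $\alpha=\beta=\tfrac{n-2}{2}$. You instead invoke Dougall's explicit product formula for the ultraspherical polynomials $C_k^\lambda$ with $\lambda=\tfrac{n-1}{2}>0$, in which every coefficient is a manifestly positive quotient of Pochhammer symbols; this is more direct in the ultraspherical case and avoids the recurrence analysis entirely. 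Your fallback via the three-term recurrence with positive coefficients is also a valid (and self-contained) alternative. Either route is adequate here; Dougall's formula is the cleaner citation from \cite{Askey} for this particular setting, while Gasper's result would be needed if one wanted to extend beyond $\alpha=\beta$.
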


\begin{proof}\hfill
	
	1.  If $k$ is odd, then the Gegenbauer polynomial $P_{k,n}(t)$ is an odd function. Since $(1 - t^2)^{\frac{n-2}{2}}$ is even, the product $P_{k,n}^3(t)(1 - t^2)^{\frac{n-2}{2}}$ is an odd function. Thus, the integral over the symmetric interval $[-1,1]$ vanishes.
	
	\vspace{1em}
	2.  
	Following the approach of Gasper~\cite{Gasper}, we expand the cubic power of $P_{k,n}$ in the orthogonal basis $\{P_{j,n}(t)\}$:
	\[
	P_{k,n}^3(t) = \sum_{j=0}^{3k} G_j\, P_{j,n}(t),
	\]
	where the coefficients $G_j$ are the linearization coefficients of the product $P_{k,n}^2(t) \cdot P_{k,n}(t)$. To analyze these, we consider the normalized Gegenbauer polynomials
	\[
	R_{k,n}(t) = \frac{P_{k,n}(t)}{P_{k,n}(1)},
	\]
	so that $R_{k,n}(1) = 1$. The square of the normalized polynomial has the expansion
	\[
	R_{k,n}^2(t) = \sum_{i=0}^{2k} G_i\, R_{i,n}(t).
	\]
	
	According to Hylleraas~\cite{Hylleraas}, the linearization coefficients satisfy
	\[
	G_i = (-1)^i c_i, \quad \text{with } c_i > 0.
	\]
	Thus, $G_0 = c_0 > 0$. To guarantee this positivity, Gasper~\cite[formula (5)]{Gasper} introduces a recurrence relation for auxiliary coefficients $d_j$ of the form:
	\begin{equation}\label{eq:gasper-recurrence}
		A_j\, d_{j+1} = B_j\, d_j - C_j\, d_{j-1},
	\end{equation}
	where in the ultraspherical case ($\alpha = \beta = \frac{n-2}{2}$, so $a = n-1$, $b = 0$, and $s = 0$), the coefficients are given by:
	\begin{align*}
		A_j &= (j+1)(2j + n)(2k + j + 2(n-1))(2k - j), \\
		B_j &= j(2k + j - 1 + 2(n - 1))(2k - j + 1)(2j), \\
		C_j &= (j + n - 2)(2j + n - 2)(2j + n - 1)(2k + j - 1 + 2(n - 1))(2k - j + 1).
	\end{align*}
	
	Gasper~\cite[p.~174]{Gasper} explicitly states that $d_0 > 0$ and $d_{2k} > 0$, providing the base for an inductive proof. The coefficients satisfy $A_j > 0$ and $C_j > 0$ for all $j$. Although $B_j$ may change sign, the recurrence structure guarantees that if $d_j > 0$ and $d_{j-1} > 0$, then $d_{j+1} > 0$. This propagates the positivity of all $d_j$.
	
	To analyze the possible sign change of $B_j$, Gasper introduces the auxiliary polynomial
	\[
	\begin{aligned}
		Q_J &= (J+2)^2(J+2k+2n-1)(2k-J-1)(2J+n) \\
		&\quad - (J+1)^2(J+2k+2n-2)(2k-J)(2J+n+2),
	\end{aligned}
	\]
	with $J = j - 1$. This polynomial arises as the difference $Q_J = A_{J+1} - A_J$, where $A_j$ is the coefficient of $d_{j+1}$ in the recurrence~\eqref{eq:gasper-recurrence}. Gasper shows that $Q_J$ is a polynomial of degree 4 in $J$ which changes sign exactly once on the interval $(0, \infty)$~\cite[p.~175]{Gasper}. This result ensures that the sequence of coefficients $A_j$ does not oscillate, and hence the recurrence is stable with respect to sign propagation. As a result, the positivity of $d_0$ and $d_1$ is sufficient to conclude that $d_j > 0$ for all $j$, and in particular that $G_0 > 0$.
	
	Now, returning to the integral,
	\[
	\int_{-1}^1 P_{k,n}^3(t)\, (1 - t^2)^{\frac{n-2}{2}}\, dt = \sum_{j=0}^{3k} G_j \int_{-1}^1 P_{j,n}(t)\, (1 - t^2)^{\frac{n-2}{2}}\, dt.
	\]
	By the orthogonality of the Gegenbauer polynomials, all terms vanish except when \( j = 0 \), for which \( P_{0,n}(t) = 1 \). Hence,
	\[
	\int_{-1}^1 P_{k,n}^3(t)\, (1 - t^2)^{\frac{n-2}{2}}\, dt = G_0 \int_{-1}^1 (1 - t^2)^{\frac{n-2}{2}}\, dt.
	\]
	Since $G_0 > 0$ and the integral is finite and positive, we conclude that
	\[
	\int_{-1}^1 P_{k,n}^3(t)\, (1 - t^2)^{\frac{n-2}{2}}\, dt > 0. \qedhere
	\]
\end{proof}


\begin{lemma} \label{lambda-prima}
	Let \( k \) be an even integer, and let \( ( w(s), \lambda(s) ) \) be the branch of nontrivial solutions to \( F(w, \lambda) = 0 \) that bifurcates from the trivial solution at \( (0, \lambda_k) \). Then, the derivative of \( \lambda \) with respect to \( s \) at \( s = 0 \) satisfies
	\[
	\frac{d\lambda}{ds}(0) < 0.
	\]
\end{lemma}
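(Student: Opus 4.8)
The plan is to run the standard second-order Crandall--Rabinowitz computation and then read off the sign of $\frac{d\lambda}{ds}(0)$ from an explicit ratio. Writing the branch as in \eqref{psi}, so that $\frac{\partial w}{\partial s}(0)=P_{k,n}$, $w(0)=0$ and $\lambda(0)=\lambda_k$, I would differentiate the identity $F(w(s),\lambda(s))\equiv 0$ twice with respect to $s$ and evaluate at $s=0$. Since $F(0,\lambda)\equiv 0$, the derivatives $F_\lambda'(0,\lambda_k)$ and $F_{\lambda\lambda}''(0,\lambda_k)$ vanish, so this produces
\[
F_{ww}''(0,\lambda_k)[P_{k,n},P_{k,n}]+2\,\frac{d\lambda}{ds}(0)\,F_{w\lambda}''(0,\lambda_k)[P_{k,n}]+F_w'(0,\lambda_k)\!\left[\tfrac{\partial^2 w}{\partial s^2}(0)\right]=0.
\]
Applying a bounded linear functional $\ell$ that annihilates $\mathrm{Range}\,F_w'(0,\lambda_k)$ eliminates the last term; because $F_{w\lambda}''(0,\lambda_k)[P_{k,n}]\notin\mathrm{Range}\,F_w'(0,\lambda_k)$ (established in the proof of Theorem \ref{Bifurc-point}), the coefficient of $\frac{d\lambda}{ds}(0)$ that remains is nonzero, and I can solve for the derivative.

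For the functional $\ell$ I would use the self-adjointness from the lemma preceding Theorem \ref{Bifurc-point}: $F_w'(0,\lambda_k)$ is self-adjoint for the $(1-t^2)^{\frac{n-2}{2}}$-weighted $L^2$ product and has one-dimensional kernel $\langle P_{k,n}\rangle$, so its range is precisely the weighted-orthogonal complement of $P_{k,n}$ and I may take $\ell(y)=\int_{-1}^1 y(t)\,P_{k,n}(t)\,(1-t^2)^{\frac{n-2}{2}}\,dt$. From $F(w,\lambda)=-\Delta_{G_\delta}w+\lambda\bigl(w+1-(w+1)^{q-1}\bigr)$ and the expansion $(w+1)^{q-1}=1+(q-1)w+\tfrac{1}{2}(q-1)(q-2)w^2+O(w^3)$, a short computation gives $F_{ww}''(0,\lambda_k)[v,v]=-\lambda_k(q-1)(q-2)v^2$ and $F_{w\lambda}''(0,\lambda_k)[v]=-(q-2)v$. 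Substituting $v=P_{k,n}$, pairing with $\ell$, and cancelling the common factor $q-2$, the formula for the derivative becomes
\[
\frac{d\lambda}{ds}(0)=-\frac{\lambda_k\,(q-1)}{2}\cdot\frac{\displaystyle\int_{-1}^1 P_{k,n}^3(t)\,(1-t^2)^{\frac{n-2}{2}}\,dt}{\displaystyle\int_{-1}^1 P_{k,n}^2(t)\,(1-t^2)^{\frac{n-2}{2}}\,dt}.
\]

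It then remains only to fix the sign. The denominator is the squared weighted $L^2$-norm of the nonzero polynomial $P_{k,n}$, hence strictly positive, while $\lambda_k>0$ and $q-1>1$; and for $k$ even the preceding lemma gives $\int_{-1}^1 P_{k,n}^3(t)(1-t^2)^{\frac{n-2}{2}}\,dt>0$. Therefore the right-hand side is strictly negative, i.e.\ $\frac{d\lambda}{ds}(0)<0$. The only substantive ingredient is the positivity of the cubic Gegenbauer integral, which is already supplied by the previous lemma via Gasper's linearization argument; everything else is routine bookkeeping with the Crandall--Rabinowitz second-variation identity. I expect the only delicate point to be tracking the signs in $F_{ww}''$ and $F_{w\lambda}''$, and confirming that $F_\lambda'$ and $F_{\lambda\lambda}''$ genuinely vanish along the trivial branch, since a sign slip there would reverse the final inequality.
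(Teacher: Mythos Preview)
Your proposal is correct and follows essentially the same approach as the paper: both differentiate the identity $F(w(s),\lambda(s))=0$ twice in $s$, evaluate at $s=0$, project onto $P_{k,n}$ in the $(1-t^2)^{\frac{n-2}{2}}$-weighted $L^2$ product to kill the range term, and arrive at the identical formula
\[
\frac{d\lambda}{ds}(0)=-\frac{\lambda_k(q-1)\int_{-1}^{1}P_{k,n}^3(t)(1-t^2)^{\frac{n-2}{2}}\,dt}{2\int_{-1}^{1}P_{k,n}^2(t)(1-t^2)^{\frac{n-2}{2}}\,dt},
\]
whose sign is then read off from the preceding lemma. Your abstract Crandall--Rabinowitz bookkeeping (with $F_\lambda'$, $F_{\lambda\lambda}''$, $F_{ww}''$, $F_{w\lambda}''$ computed explicitly) is in fact cleaner than the paper's more hands-on ODE differentiation, but the substance is the same.
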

We will use $'$ to denote differentiation with respect to the variable $t \in [-1,1]$. We have
\[
(1 - t^2)w''(t) - n t w'(t) + \frac{\lambda (q - 2)}{1+\delta^{-1}}\left[(w(t)+1)^{q-1}-w(t)-1\right] = 0.
\]

	Differentiating with respect to $s$, we have:
	
	\[	(1 - t^2)\frac{dw}{ds}''(t) - n t \frac{dw}{ds}'(t)  + \frac{\lambda(q - 2)}{1+\delta^{-1}}\left[(q-1)(w+1)^{q-2}\frac{dw}{ds}(t)  		 -\frac{dw}{ds}(t)\right]   \]
		\[ + \frac{d\lambda}{ds}\frac{(q-2)}{1+\delta^{-1}}\left[(w+1)^{q-1}-w-1\right]=0. \]

		Differentiating again with respect to $s$ and evaluating at $s=0$, we obtain
		\[
			(1 - t^2)\frac{d^2 w}{ds^2}''(0) - n t \frac{d^2 w}{ds^2}'(0) 
			+ \frac{\lambda_k(q - 2)}{1+\delta^{-1}}\left[(q-1)(w+1)^{q-2}\frac{dw}{ds}(0)-\frac{dw}{ds}(0)\right]  \]
			\[ - 2\frac{d\lambda}{ds}(0)\frac{(q-2)}{1+\delta^{-1}}\left[(w+1)^{q-1}-w-1\right]=0.
		\]
		
		Multiplying by $P_{k,n}(t)(1-t^2)^{\frac{n-2}{2}}$ and integrating over $[-1,1]$, we obtain
		\begin{align*}
			&\int_{-1}^{1}\left[(1 - t^2)\frac{d^2 w}{ds^2}''(0) - n t \frac{d^2 w}{ds^2}'(0)\right]P_{k,n}(t)(1-t^2)^{\frac{n-2}{2}}dt\\[5pt]
			&+\frac{\lambda_k(q - 2)(q-2)}{1+\delta^{-1}}\int_{-1}^{1}\frac{d^2 w}{ds^2}(0)P_{k,n}(t)(1-t^2)^{\frac{n-2}{2}}dt\\[5pt]
			&-2\frac{d\lambda}{ds}(0)\frac{(q-2)}{1+\delta^{-1}}\int_{-1}^{1}\left[(w+1)^{q-1}-w-1\right]P_{k,n}(t)(1-t^2)^{\frac{n-2}{2}}dt=0.
		\end{align*}
		
		Thus, solving  for $	\frac{d\lambda}{ds}(0)$  we get
		\[
		\frac{d\lambda}{ds}(0) =- \frac{\lambda_k (q-1) \int_{-1}^{1} P_{k,n}^3(t)(1-t^2)^{\frac{n-2}{2}}dt}{2\int_{-1}^{1}P_{k,n}^2(t)(1-t^2)^{\frac{n-2}{2}}dt}.
		\]
	
	By the previous lemma, the integral in the numerator is positive when  	$k $  is even and zero when 
	$k$ is odd, from which the result follows.


\begin{Theorem}\label{P1} 
	Let \( f \) be a proper isoparametric function on the Riemannian manifold \((M, g)\). If \( p < q_f \), then there exists \( \lambda_0 > 0 \) such that, for every \( \lambda \in (0, \lambda_0) \), any positive \( f \)-invariant solution \( u \) of equation \textup{(1.1)} satisfies \( u \equiv 1 \).
\end{Theorem}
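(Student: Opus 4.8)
The plan is to argue by contradiction, combining a uniform a priori bound for $f$-invariant solutions with a compactness argument carried out on the orbit space of the cohomogeneity-one action induced by $f$. Assume the conclusion fails; then there exist $\lambda_j\to 0^+$ and positive $f$-invariant solutions $u_j$ of \eqref{eq1} with $u_j\not\equiv 1$. Since $u_j$ is $f$-invariant we write $u_j=\varphi_j\circ f$, and $\varphi_j$ solves the associated singular ODE on $[\min f,\max f]$; near each focal manifold $M_i$ this ODE is modelled on the radial Laplace equation in dimension $m:=\dim M-\dim M_i$, so that the natural critical exponent of the reduced problem is $q_f=\frac{m+2}{m-2}$, whereas on the regular part the reduced equation is genuinely one-dimensional.

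The first step, and the one I expect to be the main obstacle, is a uniform a priori bound: there should exist $\Lambda_0,C_0>0$ such that every positive $f$-invariant solution $u$ of \eqref{eq1} with $\lambda\in(0,\Lambda_0)$ satisfies $\|u\|_\infty\le C_0$. The natural route is the standard blow-up scheme: if $\mu_j:=\|u_j\|_\infty\to\infty$ at points $P_j\to P_\infty$, one rescales $\widehat u_j(x)=\mu_j^{-1}u_j(\exp_{P_j}(r_jx))$ at the scale $r_j^2=(\lambda_j\mu_j^{q-2})^{-1}$, so that $-\Delta_{\widehat g_j}\widehat u_j=\widehat u_j^{q-1}-\mu_j^{-(q-2)}\widehat u_j$, and in the limit obtains a bounded positive entire solution $U$ of $-\Delta U=U^{q-1}$ with $U(0)=\max U=1$, posed on $\R$ if $P_\infty$ is a regular point of $f$ (where $\widehat u_j$ depends on a single variable) and radially on $\R^m$ if $P_\infty\in M_i$. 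The one-dimensional case is excluded by an elementary phase-plane computation, and the radial case on $\R^m$ by the Gidas--Spruck / Caffarelli--Gidas--Spruck Liouville theorem, precisely because $q-1<q_f=\frac{m+2}{m-2}$; this is the only point where the subcriticality hypothesis is used, and getting this dimension count right (together with convergence of the rescaled metrics) is the delicate part, for which one may adapt the estimates already present in \cite{Petean-Betancourt-Batalla,Petean2}.

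Granted the a priori bound, the rest is routine. Elliptic $L^p$ estimates followed by a Schauder bootstrap give $\|u_j\|_{C^{2,\alpha}}\le C_1$, so along a subsequence $u_j\to u_\infty$ in $C^2$; since $\Delta_g u_j=\lambda_j(u_j-u_j^{q-1})\to 0$, the limit $u_\infty$ is harmonic on the closed manifold $M$, hence a nonnegative constant $c$. Integrating \eqref{eq1} over $M$ yields $\int_M u_j=\int_M u_j^{q-1}$ for every $j$, so $c=c^{q-1}$ in the limit and $c\in\{0,1\}$; and $c=0$ is ruled out by the maximum principle, since at an interior maximum point of $u_j$ one has $-\Delta_g u_j\ge 0$, hence $u_j^{q-1}\ge u_j$ there and $\|u_j\|_\infty\ge 1$. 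Therefore $c=1$ and $u_j\to 1$ in $C^2$.

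To conclude I would set $w_j=u_j-1\not\equiv 0$ and $v_j=w_j/\|w_j\|_{C^{2,\alpha}}$, so $\|v_j\|_{C^{2,\alpha}}=1$ and $w_j\to 0$ in $C^2$. Writing $(1+w_j)^{q-1}=1+(q-1)w_j+R_j$ with $R_j=\Phi(w_j)$, $\Phi$ smooth near $0$ and $\Phi(0)=\Phi'(0)=0$, equation \eqref{eq1} becomes $-\Delta_g w_j=\lambda_j(q-2)w_j+\lambda_j R_j$; dividing by $\|w_j\|_{C^{2,\alpha}}$ and using $\|R_j\|_{C^{0,\alpha}}\le C\|w_j\|_{C^0}\|w_j\|_{C^{0,\alpha}}$ together with $\lambda_j\to 0$ shows $\|\Delta_g v_j\|_{C^{0,\alpha}}\to 0$, and then the Schauder estimate $\|v_j\|_{C^{2,\alpha}}\le C_S\bigl(\|\Delta_g v_j\|_{C^{0,\alpha}}+\|v_j\|_{C^0}\bigr)$ forces $\|v_j\|_{C^0}$ to stay bounded away from $0$, so a subsequence converges in $C^2$ to a nonzero constant $v_\infty$. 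On the other hand $\int_M u_j=\int_M u_j^{q-1}$ gives $-(q-2)\int_M w_j=\int_M R_j$, and since $\|R_j\|_{L^1}\le C\,\Vol(M)\,\|w_j\|_{C^0}^2$ this yields $\bigl|\int_M v_j\bigr|\le \frac{C\,\Vol(M)}{q-2}\,\|w_j\|_{C^0}\to 0$, contradicting $\int_M v_j\to v_\infty\,\Vol(M)\ne 0$. This contradiction completes the argument, the one genuinely delicate ingredient being the uniform a priori bound of the second paragraph, where the condition $q-1<q_f$ is indispensable.
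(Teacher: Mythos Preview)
The paper does not give its own proof of this theorem: Theorems~\ref{P1} and~\ref{P2} are stated without proof at the end of Section~2 and are used as black boxes in Section~3. They are results imported from the literature on isoparametric reductions of Yamabe-type equations (the natural source being \cite{Petean-Betancourt-Batalla}, cf.\ also \cite{Petean2}), so there is nothing to compare your argument against within this paper.

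That said, your outline is essentially the standard route to such a statement and is sound in structure: blow-up plus Liouville (Gidas--Spruck in the focal dimension, phase-plane in the regular one) for the a~priori bound, elliptic bootstrap and the integral identity $\int u=\int u^{q-1}$ to force $u_j\to 1$, and then a normalized linearization to reach a contradiction. One point you glossed over is worth flagging: in your blow-up you tacitly need $r_j\to 0$, i.e.\ $\lambda_j\mu_j^{q-2}\to\infty$, but since here $\lambda_j\to 0$ this is not automatic. The remaining cases are easy to dispatch---if $\lambda_j\mu_j^{q-2}\to c\in[0,\infty)$ one rescales only the function, $v_j=u_j/\mu_j$, and the limit $V$ satisfies $-\Delta V=cV^{q-1}$ on $M$; integrating forces $c=0$, whence $V\equiv 1$, and then $\int u_j=\int u_j^{q-1}$ together with $u_j\ge\mu_j/2$ for large $j$ bounds $\mu_j$, a contradiction---but you should say so explicitly, as this is precisely where the simultaneous degeneration $\lambda_j\to 0$ interacts with the blow-up scale.
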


\begin{Theorem} \label{P2}
	Let \(1 < q < q_f\). Then for any positive constants \(0 < \varepsilon < M\), the space of positive \(f\)-invariant solutions of Equation~\emph{(1.1)} with \(\lambda \in [\varepsilon, M]\) is compact.
\end{Theorem}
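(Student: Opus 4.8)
The plan is to deduce compactness from a uniform a priori $L^\infty$ estimate, after which elliptic regularity and the strong maximum principle complete the argument. I would first reduce the statement to the claim that there is a constant $C=C(\varepsilon,M,q,g)$ such that every positive $f$-invariant solution $u$ of \eqref{eq1} with $\lambda\in[\varepsilon,M]$ satisfies $\|u\|_{L^\infty}\le C$. (For $1<q<2$ the maximum principle, evaluated at a maximum and at a minimum point of $u$, already forces $u\equiv1$, so, as in \eqref{eq1}, we may assume $q>2$.) Granting such a bound, Schauder estimates applied to $-\Delta_g u=\lambda(u^{q-1}-u)$ yield a uniform bound on $\|u\|_{C^{2,\alpha}}$; hence from any sequence $(u_j,\lambda_j)$ of such solutions the Arzel\`a--Ascoli theorem extracts a subsequence with $\lambda_j\to\lambda_\infty\in[\varepsilon,M]$ and $u_j\to u_\infty$ in $C^2$, where $u_\infty\ge0$ is $f$-invariant and solves \eqref{eq1} with parameter $\lambda_\infty$. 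Evaluating \eqref{eq1} at a maximum point of any positive solution gives $(\max u)^{q-2}\ge1$, so $\max u_\infty\ge1$ and $u_\infty\not\equiv0$; since $-\Delta_g u_\infty+\lambda_\infty u_\infty=\lambda_\infty u_\infty^{q-1}\ge0$, the strong maximum principle forces $u_\infty>0$. Thus $u_\infty$ again belongs to the set under consideration, a routine bootstrap promotes the convergence to $C^{2,\alpha}$, and compactness follows. Everything therefore rests on the a priori bound.

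For that bound I would run a blow-up argument adapted to the cohomogeneity-one structure, carried out on the reduced equation. Recall that $u=\varphi\circ f$ solves \eqref{eq1} if and only if $\varphi\colon[-1,1]\to\R_{>0}$ satisfies, in self-adjoint form,
\[
\bigl((1-t^2)^{\frac{n-2}{2}}\varphi'\bigr)'+\frac{\lambda(q-2)}{1+\delta^{-1}}\,(1-t^2)^{\frac{n-2}{2}}\bigl(\varphi^{q-1}-\varphi\bigr)=0,\qquad\varphi'(\pm1)=0,
\]
and that its weight $(1-t^2)^{(n-2)/2}$, once one matches $1\mp t$ to a positive multiple of the squared geodesic distance $r^{2}$ to the focal manifold $M_i$ — legitimate because $|\nabla_{G_\delta}f|^2=(1+\delta^{-1})(1-f^2)$ forces $1\mp f$ to vanish exactly quadratically along $M_i$ — coincides near each endpoint with the weight $r^{n-1}$ of the radial Laplacian in dimension $n=\mathrm{codim}\,M_i$. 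Suppose, for contradiction, that there are positive $f$-invariant solutions $(u_j,\lambda_j)$ with $\lambda_j\in[\varepsilon,M]$ and $L_j:=\max u_j=u_j(x_j)\to\infty$; passing to subsequences, assume $\lambda_j\to\lambda_\infty\in[\varepsilon,M]$, set $r_j:=\mathrm{dist}(x_j,M_1\cup M_2)$ and $\mu_j:=L_j^{-(q-2)/2}\to0$, and rescale $u_j$ around $x_j$ at scale $\mu_j$ (using the normal variable near the focal set). The rescaled functions take values in $[0,1]$, equal $1$ at the centre, solve a uniformly elliptic equation whose coefficients converge to the flat model, and hence subconverge in $C^2_{\mathrm{loc}}$ to a limit $v\ge0$ that equals $1$ at its centre.

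This limit satisfies one of two model equations, according to whether $r_j/\mu_j\to\infty$ or $r_j/\mu_j$ stays bounded. In the first case the level sets of $f$ flatten to parallel hyperplanes, the $O(n+1)$-invariance forces $v$ to depend on a single variable, and $v$ solves $-v''=\tilde c\,v^{q-1}$ on $\R$ for some $\tilde c>0$, with $v\ge0$ attaining its maximum; conservation of the energy $\tfrac12(v')^2+\tfrac{\tilde c}{q}v^{q}$ then forces $v$ to reach $0$ at a finite point with nonzero slope, contradicting $v\ge0$ on all of $\R$. In the second case the normal coordinate turns the limiting equation into $-\Delta_{\R^{n}}v=\tilde c\,v^{q-1}$, with $v\ge0$ a radial entire solution that is not identically zero; since $q-1<q<q_f=\tfrac{n+2}{n-2}$, the exponent $q-1$ is strictly below the Sobolev-critical exponent of $\R^{n}$, so the Gidas--Spruck Liouville theorem gives $v\equiv0$, a contradiction. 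Hence $L_j$ is bounded, which is the desired a priori estimate.

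The main obstacle I anticipate is the rigorous set-up of the blow-up near the focal submanifolds: choosing Fermi coordinates along $M_i$, controlling the convergence of the rescaled metrics to the Euclidean one, and — the genuinely delicate point — checking that the rescaled $O(n+1)$-invariant profiles converge to a bona fide $n$-dimensional radial solution, so that it is the Sobolev exponent associated with the focal codimension $n$ (equivalently, with $q_f$), rather than the one associated with the ambient dimension $2n$, that governs the argument; this is precisely where the isoparametric geometry of $f$ on $(\mathbb{S}^n\times\mathbb{S}^n,G_\delta)$ is used. Once this identification is secured, the remaining ingredients — Schauder estimates, Arzel\`a--Ascoli, the Gidas--Spruck theorem, and the strong maximum principle — are standard.
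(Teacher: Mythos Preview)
The paper does not actually prove this statement: Theorems~\ref{P1} and~\ref{P2} are quoted without argument, as known results for $f$-invariant solutions on a manifold with an isoparametric function (they are taken from \cite{Petean-Betancourt-Batalla}), and are then used as black boxes in Section~3. There is therefore no proof in the paper to compare yours against.

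That said, your outline is the standard route to such compactness theorems and is correct in substance. The reduction to a uniform $L^\infty$ bound via Schauder estimates, Arzel\`a--Ascoli, and the strong maximum principle is routine; the real content is the blow-up dichotomy, and you have identified the crucial point: because $f$-invariant solutions are effectively one-dimensional away from the focal sets and effectively $n$-dimensional radial functions near them (the focal submanifolds $M_i\cong\mathbb{S}^n$ have codimension $n$ in the $2n$-manifold), the governing Liouville threshold is the Sobolev exponent of $\R^n$, namely $q_f=\tfrac{n+2}{n-2}$, not that of $\R^{2n}$. Your two limiting cases --- the one-dimensional energy argument for interior blow-up and the Gidas--Spruck theorem for blow-up near a focal set --- are exactly the right ingredients, and the hypothesis $q<q_f$ ensures $q-1$ is subcritical for $\R^n$, which is precisely what Gidas--Spruck requires. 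The technical caveat you flag (setting up Fermi coordinates and controlling metric convergence so that the rescaled profiles genuinely converge to a radial solution in $\R^n$) is real but standard once the isoparametric structure is in hand; this is where the identities $|\nabla_{G_\delta}f|^2=(1+\delta^{-1})(1-f^2)$ and $\Delta_{G_\delta}f=-n(1+\delta^{-1})f$ do their work.
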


	\section{Proof of Theorem~\ref{t2}}
	
	Let us consider the space \( D \) of nontrivial positive \( f \)-invariant solutions of equation~\eqref{eq-w+1}, defined by
	\[
	D := \left\{ (w, \lambda) \in \left( C^{2,\alpha}([-1,1]) \setminus \{0\} \right) \times (0,\infty) : w \text{ is a positive solution of } \eqref{eq-w+1} \right\}.
	\]
	
	By Theorem \ref{Bifurc-point}, each point \( (0, \lambda_k) \) is a bifurcation point of the equation \( F(w,\lambda) = 0 \). Near such a point, the branch of nontrivial solutions can be locally parameterized by a smooth curve \( s \mapsto (w(s), \lambda(s)) \), with
	\[
	w(s) = s P_{k,n} + o(s^2), \quad \lambda(0) = \lambda_k, \quad s \to 0.
	\]
	
	Since \( \frac{dw}{ds}(0) = P_{k,n} \), and  by lemma \ref{zeros-P_kn} \( P_{k,n} \)  has exactly \( k \) simple zeros in the open interval \( (-1,1) \). Then, by continuity of roots of smooth functions under perturbation in \( C^2([-1,1]) \), it follows that for \( s \neq 0 \) sufficiently small, the function \( w(s) \) also has exactly \( k \) simple zeros in \( (-1,1) \).
	Therefore, for small \( s \neq 0 \), the solution \( w(s) \) attains the value 0 exactly \( k \) times in \( (-1,1) \).
	
	We now define the connected component 	\(  	D_k^+ \)  of  \(D\)  containing  the set \(   \{ (w(s), \lambda(s)) : s > 0 \} \)  and the connected component    \(  D_k^- \) containing  the set \(   \{ (w(s), \lambda(s)) : s <  0 \} \).  
	
	 Let \( D_k := D_k^+ \cup D_k^- \). 	Since \( P_{k,n}(1) > 0 \), we have \( w(s)(1) > 0 \) for \( s > 0 \) small. Furthermore, if \( w(s)(0) = 0 \) for some \( s > 0 \), then by uniqueness of solutions to analytic ODEs and initial conditions, \( w(s) \equiv 0 \), contradicting the assumption that \( w(s) \neq 0 \). Hence, \( w(s) \) is strictly nonzero near the boundary, and the sign of \( w(-1) \) is preserved in a neighborhood.
	
	Let \( (w, \lambda) \in D \) be any solution. At each zero \( t_0 \) of \( w(t) \), we must have \( w'(t_0) = 0 \), due to the boundary conditions~\eqref{boundary-cond-1}. Moreover, if \( w(-1) > 0 \) (or \( < 0 \)), then by continuity of the evaluation map in the \( C^{2,\alpha} \)-topology, there exists a neighborhood \( U \subset C^{2,\alpha}([-1,1]) \times \mathbb{R} \) such that every \( (v, \mu) \in U \) satisfies \( v(-1) > 0 \) (or \( < 0 \)), and the number of zeros of \( v \) equals that of \( w \).
	
	Consequently, both the number of preimages of zero and the sign of \( w(-1) \) are constant on each connected component of \( D \). In particular, for every \( (w,\lambda) \in D_k \), the number of points \( t \in (-1,1) \) such that \( w(t) = 0 \) is exactly \( k \).
	
	From this, we deduce that if \( (0,\lambda) \in D_k \), then necessarily \( \lambda = \lambda_k \), as only the eigenfunction \( P_{k,n} \) has exactly \( k \) simple zeros in \( (-1,1) \).
	
	By Lemma~\ref{lambda-prima}, we have \( \frac{d\lambda}{ds}(0) < 0 \), so for \( s > 0 \) sufficiently small,
	\[
	\lambda(s) < \lambda_k.
	\]
	
	Now, by Theorem~\ref{P1}, there exists \( \lambda_0 > 0 \) such that if \( (w,\lambda) \in D_k^+ \), then necessarily \( \lambda \geq \lambda_0 \). From Theorem~\ref{P2}, the set
	\[
	\{ (w, \lambda) \in D_k : \lambda \in [\lambda_0, \lambda_k] \}
	\]
	is compact in \( C^{2,\alpha}([-1,1]) \times \mathbb{R} \). Therefore, there exists \( (w^*, \lambda^*) \in D_k^+ \) such that \( \lambda^* \) is minimal among all values of \( \lambda \) in \( D_k^+ \).
	
	Suppose \( w^* \) were constant. Then \( (w^*, \lambda^*) = (0, \lambda^*) \) and would lie on the trivial branch, implying \( \lambda^* = \lambda_i \) for some \( i < k \), contradicting the fact that all elements in \( D_k \) have \( k \) zeros. Thus, \( w^* \) is non-constant, and \( (w^*, \lambda^*) \in D_k^+ \).
	
	Assume, towards a contradiction, that \( D_w F(w^*, \lambda^*) \) is an isomorphism. Then by the implicit function theorem, there exists a smooth local curve \( s \mapsto (w(s), \lambda(s)) \) of solutions to \( F(w(s), \lambda(s)) = 0 \), with \( w(0) = w^* \), \( \lambda(0) = \lambda^* \), and \( (w(s), \lambda(s)) \in D_k^+ \). Since \( \lambda^* \) is minimal, it follows that
	\[
	\left.\frac{d\lambda}{ds}\right|_{s=0} = 0.
	\]
	
	Differentiating the equation
	\[
	F(w(s), \lambda(s)) = -\Delta_{G_\delta} w + \lambda \left( w + 1 - (w + 1)^{q-1} \right) = 0
	\]
	with respect to \( s \) at \( s = 0 \), we obtain the linearized equation
	\[
	-\Delta v + \lambda^* (v - q u_*^{q - 2} v) = 0,
	\]
	where \( v = \left. \frac{dw}{ds} \right|_{s=0} \), and \( u_* = w^* + 1 \). This implies \( v \in \ker(D_w F(w^*, \lambda^*)) \), contradicting the assumption that the linearized operator is invertible.
	
	Therefore, \( D_w F(w^*, \lambda^*) \) is not an isomorphism, and \( (w^*, \lambda^*) \) is a degenerate solution of equation~\eqref{eq-w+1}. Consequently, \( u^* := w^* + 1 \) is a positive, \( f \)-invariant, non-constant solution of equation~\eqref{eq3}, which depends nontrivially on both factors.
	
	This completes the proof.
	\qed

\vspace{.1cm}

\end{document}